\numberwithin{equation}{section}
\numberwithin{figure}{section}
\theoremstyle{plain}
\newtheorem{thm}{\protect\theoremname}
\theoremstyle{plain}
\newtheorem{cor}[thm]{\protect\corollaryname}
\theoremstyle{definition}
\newtheorem{example}[thm]{\protect\examplename}
\theoremstyle{plain}
\newtheorem{lem}[thm]{\protect\lemmaname}
\theoremstyle{remark}
\newtheorem{rem}[thm]{\protect\remarkname}
\newenvironment{thmprime}
{
  \edef\thmlabel{\thethm}
  \renewcommand{\thethm}{\thmlabel$'$}%
  \addtocounter{thm}{-1}%
   \begin{thm}}
  {\end{thm}}
\providecommand{\corollaryname}{Corollary}
\providecommand{\examplename}{Example}
\providecommand{\lemmaname}{Lemma}
\providecommand{\remarkname}{Remark}
\providecommand{\theoremname}{Theorem}
\begin{document}
\title{A note on sign of a self-dual representation}

\subjclass{20C33, 22E50}
\keywords{Frobenius-Schur indicator, 
Self-dual representation,
Deligne-Lusztig character
}

\author{manish mishra}
\address{Indian Institute of Science Education and Research Pune, Dr. Homi
Bhabha Road, Pasha, Pune 411008, Maharashtra, India}
\email{manish@iiserpune.ac.in}
\begin{abstract}
D. Prasad showed that the sign of a self-dual representation of a
finite or $p$-adic reductive group is often detected by a central
element. We study the extension of his results to some more general
situations and make some observations about the consequences of his
results.
\end{abstract}

\maketitle

\section{introduction}

Let $\mathcal{G}$ be a group and let $(\tau,V)$ be an irreducible
complex representation of $\mathcal{G}$. If $\tau$ is self-dual,
i.e., it is isomorphic to its contragradient $\tau^{\vee}$, then
there exists a non-degenerate $\mathcal{G}$-invariant bilinear form
$B:V\times V\rightarrow\mathbb{C}$ which is unique up to scalars.
It is thus either symmetric or skew symmetric. The \textit{sign} or
the \textit{Frobenius--Schur indicator} $\mathrm{sgn}(\tau)$ of
$\tau$ is defined to be $+1$ (resp. $-1$) according as $B$ is
symmetric (resp. skew-symmetric).

Let $G$ be a connected reductive group over a field $F$ where $F$
is either finite or non-archimedean local. In \cite{Pra98,Pra99},
D. Prasad showed that the sign of a self-dual representation $\pi$
of $G(F)$ is often detected by a certain order $\leq2$ element $\epsilon$
of the center $Z(F)$ of $G(F)$. This element $\epsilon$ can be
described in terms of half the sum of positive roots (see \S \ref{subsec:referee},
see also \cite{adams14}*{\S 5} for real groups). When $F$ is finite,
$Z$ is connected or finite with odd cardinality and $\pi$ is a generic
(i.e., a representation admitting a Whittaker model) self-dual representation
of $G(F)$, he showed \cite{Pra98} that $\mathrm{sgn}(\pi)$ is given
by the central character $\omega_{\pi}$ evaluated at $\epsilon$.
For an arbitrary group, it can be shown that $\epsilon$ fails to
detect the sign in general. 

In this note, we make some observations based on the results of Prasad.
For $F$ finite, in Theorem \ref{thm:pra'} we make a small improvement
to the Theorem of Prasad (Theorem \ref{thm:pra}), namely that it
works with a weaker hypothesis on $Z$. More specifically, if the
Frobenius co-invariants of the component group of $Z$ is of odd cardinality,
then $\epsilon$ detects the sign of an irreducible generic self-dual
representation. In Section \S \ref{subsec:referee}, Theorem \ref{thm:pra'}
is further refined to cover more groups. For an arbitrary finite reductive
group $G$, we show in Theorem \ref{thm:discon-cent} that the method
of Prasad of detecting $\mathrm{sgn}(\pi)$ can be made to work by
embedding $G$ into a reductive group $G'$ with connected center
and having the same derived group. This is illustrated more concretely
in Corollary \ref{lem:doscon-cent} in the case of an irreducible
generic, self-dual Deligne-Lusztig character $\pm R_{\mathtt{S}}^{\mathtt{G}}(\theta)$.

Corollaries \ref{cor:pra} and \ref{cor:pra2} observe another consequence
of Prasad's theorems, namely that for split reductive - finite or
$p$-adic groups - with connected center, the sign of an irreducible
generic self-dual representation is always $1$. 

\section{notations}

In \S 3, $\mathbb{F}_{q}$ denotes a finite field of order $q$ with
absolute Galois group $\Gamma$ and $\mathbb{F}$ denotes an algebraic
closure of $\mathbb{F}_{q}$. For an algebraic group $\mathcal{G}$
defined over a field $F$, we denote its identity component by $\mathcal{G}^{\circ}$
and its derived group by $\mathcal{G}_{\mathrm{der}}$. If $\tau$
is a representation of $\mathcal{G}(F)$, we denote its dual (or contragradient)
by $\tau^{\vee}$. If $\tau$ is self-dual, we denote its Frobenius-Schur
indicator by $\mathrm{sgn}(\pi)$. The central character of $\tau$
will be denoted by $\omega_{\tau}$. 

\section{\label{sec:finite}finite reductive group }

\subsection{\label{subsec:conn-cent}Groups with connected center}

Let $\mathtt{G}$ be a connected reductive group defined over $\mathbb{F}_{q}$
and let $\mathtt{Z}$ denote the center of $\mathtt{G}$. Let $\mathtt{B=TU}$
be an $\mathbb{F}_{q}$-Borel subgroup of $\mathtt{G}$, where $\mathtt{U}$
is the unipotent radical of $\mathtt{B}$ and $\mathtt{T}$ is an
$\mathbb{F}_{q}$-maximal torus of $\mathtt{G}$ contained in $\mathtt{B}$.
Denote by $X(\mathtt{T})$ (resp. $X^{\vee}(\mathtt{T})$), the character
(resp. co-character) lattice of $\mathtt{T}$ and by $\Phi$, the
set of roots of $\mathtt{T}$ in $\mathtt{G}$. Let $\Delta$ denote
the set of simple roots in $\Phi$ determined by $\mathtt{B}$. 

Recall that an irreducible representation $\pi$ of $\mathtt{G}(\mathbb{F}_{q})$
is called \textit{generic} if \\
$\mathrm{Hom}_{\mathtt{U}(\mathbb{F}_{q})}(\pi,\psi)\neq0$ for some
\textit{non-degenerate} character $\psi:\mathtt{U}(\mathbb{F}_{q})\rightarrow\mathbb{C}^{\times}$.
Here non-degenerate means that the stabilizer of $\psi$ in $(\mathtt{T}/\mathtt{Z})(\mathbb{F}_{q})$
is trivial. 

We now state a Theorem of D. Prasad on signs \cite{Pra98}*{Thm. 3}.
\begin{thm}
[Prasad] \label{thm:pra}If $\mathtt{Z}$ is connected or of odd
order, then there exists an element $s_{0}$ in $\mathtt{T}(\mathbb{F}_{q})$
such that it operates by $-1$ on all the simple root spaces of $\mathtt{U}$.
Further, $s_{0}^{2}$ belongs to $\mathtt{Z}(\mathbb{F}_{q})$ and
$s_{0}^{2}$ acts on an irreducible, generic, self-dual representation
by $1$ iff the representation is orthogonal. 
\end{thm}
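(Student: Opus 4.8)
The plan is to treat the two assertions separately: first the existence of $s_0$, which is a statement about the torus $\mathtt{T}$, and then the computation of the Frobenius--Schur indicator, which uses genericity through the theory of Whittaker models.

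\emph{Existence of $s_0$.} Since $\mathtt{Z}=\bigcap_{\alpha\in\Phi}\ker\alpha=\bigcap_{\alpha\in\Delta}\ker\alpha$, the homomorphism $\mathtt{T}\to\mathbb{G}_m^{\Delta}$, $t\mapsto(\alpha(t))_{\alpha\in\Delta}$, is surjective (on character lattices it is the inclusion $\mathbb{Z}\Delta\hookrightarrow X(\mathtt{T})$, injective because $\Delta$ is linearly independent) with kernel exactly $\mathtt{Z}$. The fibre over the $\mathbb{F}_q$-point $(-1,\ldots,-1)$ is a $\mathbb{Z}$-coset defined over $\mathbb{F}_q$, and it contains an $\mathbb{F}_q$-point if and only if a certain class $c$ in $H^1(\Gamma,\mathtt{Z})$ vanishes. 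If $\mathtt{Z}$ is connected this group is trivial by Lang's theorem. If $\mathtt{Z}$ has odd order, then $H^1(\Gamma,\mathtt{Z})$ has odd order while $2c=0$ (it is the obstruction class of $(-1,\ldots,-1)^2=(1,\ldots,1)$), so $c=0$. Either way we obtain $s_0\in\mathtt{T}(\mathbb{F}_q)$ with $\alpha(s_0)=-1$ for all $\alpha\in\Delta$; thus $s_0$ acts by $-1$ on every simple root space of $\mathtt{U}$, and since $\alpha(s_0^2)=1$ for every $\alpha\in\Phi$ we get $s_0^2\in\mathtt{Z}(\mathbb{F}_q)$.

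\emph{The sign.} Fix a Chevalley involution $\omega$ of $\mathtt{G}$, defined over $\mathbb{F}_q$, that inverts $\mathtt{T}$ and interchanges the root subgroups $\mathtt{U}_\alpha$ and $\mathtt{U}_{-\alpha}$; it has the property that $\pi\circ\omega\cong\pi^{\vee}$ for every irreducible representation $\pi$ of $\mathtt{G}(\mathbb{F}_q)$ (on characters it realizes $g\mapsto g^{-1}$ up to conjugacy). Choosing a representative $\dot w_0\in N_{\mathtt{G}}(\mathtt{T})(\mathbb{F}_q)$ of the longest Weyl element, the automorphism $\tau:=\mathrm{Int}(\dot w_0)\circ\omega$ preserves $\mathtt{B}$, $\mathtt{T}$ and $\mathtt{U}$, still satisfies $\pi\circ\tau\cong\pi^{\vee}$, carries a fixed non-degenerate character $\psi$ of $\mathtt{U}(\mathbb{F}_q)$ to a non-degenerate character in the $\mathtt{T}(\mathbb{F}_q)$-conjugacy orbit of $\overline{\psi}$, and satisfies $\tau^{2}=\mathrm{Int}\bigl(\dot w_0\,\omega(\dot w_0)\bigr)$ with $\dot w_0\,\omega(\dot w_0)\in\mathtt{T}(\mathbb{F}_q)$. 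After replacing $\dot w_0$ by $t_1\dot w_0$ for a suitable $t_1\in\mathtt{T}(\mathbb{F}_q)$ one may assume $\psi\circ\tau^{-1}=\overline{\psi}$, and here $s_0$ enters: since $\overline{\psi}$ negates the arguments on the simple root subgroups, $\overline{\psi}={}^{s_0}\psi$. Now let $\pi$ be irreducible, generic and self-dual. From $\pi\cong\pi\circ\tau$ and Schur's lemma we get an intertwiner $A$ on the space of $\pi$ with $A\,\pi(g)\,A^{-1}=\pi(\tau(g))$, unique up to scalar, whence $A^{2}=c\,\pi\bigl(\dot w_0\,\omega(\dot w_0)\bigr)$ for a scalar $c$. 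The hypothesis on $\mathtt{Z}$ guarantees that $\pi$ occurs with multiplicity one in the Gelfand--Graev representation, so a Whittaker functional $\ell$ for $(\pi,\psi)$, and with it $A$, is canonical up to scalar. Pairing the Whittaker model $W(\pi,\psi)$ against $W(\pi^{\vee},\overline{\psi})$ realized through $A$, one obtains a nondegenerate $\mathtt{G}(\mathbb{F}_q)$-invariant bilinear form $B$ on the space of $\pi$, whose symmetry type is $\mathrm{sgn}(\pi)$. Comparing $B(v,v')$ with $B(v',v)$ exchanges the roles of $A$ and $A^{-1}$ and of $\dot w_0$ and $\dot w_0^{-1}$; the resulting scalar is $c$ times the scalar by which $\pi\bigl(\dot w_0\,\omega(\dot w_0)\,\dot w_0^{-2}\bigr)$ acts, and identifying $\dot w_0\,\omega(\dot w_0)\,\dot w_0^{-2}$ with $s_0^{-2}$ up to the scalar ambiguity already absorbed in $c$ yields $\mathrm{sgn}(\pi)=\omega_\pi(s_0^2)$. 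Finally $\omega_\pi$ is quadratic because $\pi$ is self-dual, so $\omega_\pi(s_0^2)=\pm1$, and $s_0^2$ acts by $1$ exactly when $\pi$ is orthogonal.

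\emph{Main obstacle.} The delicate point is the last paragraph. One must invoke the structural input that generic representations occur with multiplicity one in the Gelfand--Graev module --- this is exactly what forces the hypothesis that $\mathtt{Z}$ be connected or of odd order, and is the step relaxed in Theorem~\ref{thm:pra'}. And one must fix mutually compatible normalizations of $\omega$, $\dot w_0$, $\psi$ and $\ell$ and track the resulting scalar with care, so as to be sure that the central element detecting the sign is precisely $s_0^2$, and not some other element of $\mathtt{Z}(\mathbb{F}_q)$ differing from it by $\dot w_0^2$.
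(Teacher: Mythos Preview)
Your existence argument for $s_{0}$ is correct and is essentially the one the paper gives (for the slightly more general Theorem~\ref{thm:pra'}): pass to $\mathtt{T}_{\mathrm{ad}}$ (your $\mathbb{G}_m^{\Delta}$), observe that the obstruction to lifting $(-1,\dots,-1)$ lies in $\mathrm{H}^{1}(\Gamma,\mathtt{Z})$, and kill it under the hypothesis on $\mathtt{Z}$. Your remark that the obstruction class satisfies $2c=0$ is a clean way to handle the odd-order case.

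For the sign, the paper does not argue at all: it simply invokes \cite{Pra98}*{Lemma~1}. That lemma is much more direct than your sketch. Its input is only that $s_{0}$ normalises $\mathtt{U}$ and conjugates $\psi$ to $\psi^{-1}$, together with the one-dimensionality of $\mathrm{Hom}_{\mathtt{U}(\mathbb{F}_q)}(\pi,\psi)$. From a nonzero Whittaker functional $\ell$ one gets a second $\psi$-functional by composing the invariant bilinear form $B$ with $\pi(s_{0})$ on one variable; comparing the two via multiplicity one produces the scalar $\omega_{\pi}(s_{0}^{2})$ directly. No Chevalley involution, no $\dot w_{0}$, no auxiliary automorphism $\tau$ are needed.

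Your route through $\omega$ and $\dot w_{0}$ is therefore a genuine detour, and it creates the gap you yourself flag. The element $\dot w_{0}\,\omega(\dot w_{0})\,\dot w_{0}^{-2}$ has no a priori relation to $s_{0}^{-2}$: both lie in $\mathtt{T}(\mathbb{F}_{q})$, but they are built from completely different data, and ``up to the scalar ambiguity already absorbed in $c$'' does not explain why $\omega_{\pi}$ takes the same value on them. (Indeed $\dot w_{0}^{2}$ typically does \emph{not} lie in $\mathtt{Z}$, so the discrepancy is not even central.) As written, the computation of $\mathrm{sgn}(\pi)$ is not complete. The fix is not to tighten the bookkeeping but to drop the Chevalley involution entirely and argue as in Prasad's lemma, using only that ${}^{s_{0}}\psi=\psi^{-1}$.
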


Denote by $\sigma$ the Frobenius endomorphism of $\mathtt{G}$ and
by $(-)_{\sigma}$, the co-invariants with respect to $\sigma$. For
the existence of $s_{0}\in\mathtt{T}(\mathbb{F}_{q})$, we can weaken
the hypothesis on $\mathtt{Z}$ and rewrite Theorem \ref{thm:pra}
as,

\begin{thmprime}

\label{thm:pra'}Assume $(\mathtt{Z}/\mathtt{Z}^{\circ})_{\sigma}$
is of odd order. Then there exists an element $s_{0}$ in $\mathtt{T}(\mathbb{F}_{q})$
such that it operates by $-1$ on all the simple root spaces of $\mathtt{U}$.
Further, $s_{0}^{2}$ belongs to $\mathtt{Z}(\mathbb{F}_{q})$ and
$\mathrm{sgn}(\pi)=\omega_{\pi}(s_{0}^{2})$ for any irreducible,
generic, self-dual representation $\pi$ of $\mathtt{G}(\mathbb{F}_{q})$. 

\end{thmprime}

\begin{proof}

Let $\mathtt{T}_{\mathrm{ad}}$ denote the adjoint torus of $\mathrm{T}$.
From the short exact sequence
\[
\xymatrix{1\ar@{->}[r] & \mathtt{Z}\ar@{->}[r] & \mathtt{T}\ar@{->}[r] & \mathtt{T}_{\mathrm{ad}}\ar@{->}[r] & 1}
,
\]
we get the long exact sequence
\[
\xymatrix{1\ar@{->}[r] & \mathtt{Z}(\mathbb{F}_{q})\ar@{->}[r] & \mathtt{T}(\mathbb{F}_{q})\ar@{->}[r] & \mathtt{T}_{\mathrm{ad}}(\mathbb{F}_{q})\ar@{->}[r] & \mathrm{H}^{1}(\Gamma,\mathtt{Z})\ar@{->}[r] & \cdots}
.
\]

There is an element $\iota_{-}\in\text{\ensuremath{\mathtt{T}_{\mathrm{ad}}(\mathbb{F}_{q})}}$
which acts by $-1$ on all simple root spaces of $\mathtt{U}$. We
claim that $\iota_{-}$ admits a pull back in $\mathtt{T}(\mathbb{F}_{q})$.
To show that, it suffices to show that $\mathrm{H}^{1}(\Gamma,\mathtt{Z})$
is of odd cardinality. Now using the exact sequence 
\[
1\rightarrow\mathtt{Z}^{\circ}\rightarrow\mathtt{Z}\rightarrow\mathtt{Z}/\mathtt{Z}^{\circ}\rightarrow1,
\]
 and then taking the long exact sequence, we get that 
\[
\mathrm{H}^{1}(\Gamma,\mathtt{Z}^{\circ})\rightarrow\mathrm{H}^{1}(\Gamma,\mathtt{Z})\rightarrow\mathrm{H}^{1}(\Gamma,\mathtt{Z}/\mathtt{Z}^{\circ})
\]
is exact. By Lang's Theorem, $\mathrm{H}^{1}(\Gamma,\mathtt{Z}^{\circ})=0$.
Since $\mathrm{H}^{1}(\Gamma,\mathtt{Z}/\mathtt{Z}^{\circ})\cong(\mathtt{Z}/\mathtt{Z}^{\circ})_{\sigma}$,
the claim follows from our hypothesis. Now the result, as in the proof
of Theorem \ref{thm:pra}, follows as a corollary of \cite{Pra98}*{Lemma 1}.

\end{proof}

We record an easy corollary of Theorem \ref{thm:pra}. 
\begin{cor}
\label{cor:pra}If $\mathtt{G}$ is split, $\mathtt{Z}$ is connected
and $\pi$ is an irreducible, generic, self-dual representation, then
$\mathrm{sgn}(\pi)=1$. 
\end{cor}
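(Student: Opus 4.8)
The plan is to run Prasad's argument with a judiciously chosen torus element. Since $\mathtt{Z}$ is connected, Theorem~\ref{thm:pra} applies, and it reduces the claim to producing a single $s_0\in\mathtt{T}(\mathbb{F}_q)$ that acts by $-1$ on every simple root space and satisfies $s_0^2=1$: then $s_0^2$ acts by the identity on $\pi$, so $\pi$ is orthogonal, i.e.\ $\mathrm{sgn}(\pi)=1$. That Theorems~\ref{thm:pra} and \ref{thm:pra'} are phrased as existence statements is harmless here: the underlying \cite{Pra98}*{Lemma 1} gives $\mathrm{sgn}(\pi)=\omega_\pi(s_0^2)$ for \emph{every} $s_0$ acting by $-1$ on the simple root spaces; alternatively, any two admissible choices of $s_0$ differ by some $z\in\mathtt{Z}(\mathbb{F}_q)$, and since $\pi\cong\pi^\vee$ forces $\omega_\pi^2=1$ we get $\omega_\pi(z^2)=1$, so $\omega_\pi(s_0^2)$ is independent of the choice.

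To build such an $s_0$ I would work at the level of cocharacters, using that $\mathtt{G}$ split makes $\mathtt{T}$ and the adjoint torus $\mathtt{T}_{\mathrm{ad}}=\mathtt{T}/\mathtt{Z}$ split. The character lattice of $\mathtt{T}_{\mathrm{ad}}$ is the root lattice $\mathbb{Z}\Phi$, which has $\Delta$ as a $\mathbb{Z}$-basis; let $\rho^\vee\in X^\vee(\mathtt{T}_{\mathrm{ad}})$ be the sum of the dual basis, i.e.\ the unique cocharacter of $\mathtt{T}_{\mathrm{ad}}$ with $\langle\alpha,\rho^\vee\rangle=1$ for all $\alpha\in\Delta$. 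Then $\rho^\vee(-1)$ is exactly the element $\iota_-$ from the proof of Theorem~\ref{thm:pra'}, acting by $-1$ on every simple root space. Now I invoke connectedness of $\mathtt{Z}$: since $\mathtt{Z}$ is a torus, the sequence $1\to\mathtt{Z}\to\mathtt{T}\to\mathtt{T}_{\mathrm{ad}}\to1$ induces a short exact sequence $0\to X^\vee(\mathtt{Z})\to X^\vee(\mathtt{T})\to X^\vee(\mathtt{T}_{\mathrm{ad}})\to0$ of cocharacter lattices, so $\rho^\vee$ lifts to $\mu\in X^\vee(\mathtt{T})$. Set $s_0:=\mu(-1)\in\mathtt{T}(\mathbb{F}_q)$. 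By construction $s_0$ maps to $\iota_-$, hence acts by $-1$ on all simple root spaces, and $s_0^2=\mu(-1)^2=\mu(1)=1$ because $\mu$ is a homomorphism. Plugging this $s_0$ into Theorem~\ref{thm:pra} (equivalently \cite{Pra98}*{Lemma 1}) gives $\mathrm{sgn}(\pi)=\omega_\pi(s_0^2)=1$.

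No step is technically hard; the content is the observation that the two hypotheses together trivialize Prasad's sign-detecting element $\epsilon=s_0^2$. Connectedness of $\mathtt{Z}$ is precisely what makes $X^\vee(\mathtt{T})\to X^\vee(\mathtt{T}_{\mathrm{ad}})$ surjective (the kernel being a torus, there is no $\mathrm{Ext}^1$-obstruction to the lift), while splitness is what lets the lifted cocharacter be evaluated at $-1\in\mathbb{F}_q^{\times}$ to land in $\mathtt{T}(\mathbb{F}_q)$; if one drops either hypothesis, $\epsilon$ need no longer be trivial. The only point requiring a word of care is the one noted in the first paragraph---that the specific $s_0$ we manufacture is legitimate for Prasad's formula---and this follows at once from either reading of the input lemma.
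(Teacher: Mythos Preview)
Your proof is correct but takes a different route from the paper. The paper does not try to trivialize $s_0^2$ itself; instead it takes whatever $s_0$ Theorem~\ref{thm:pra} hands it and, using that a split $\mathtt{T}$ with connected $\mathtt{Z}$ splits as $\mathtt{Z}\times\mathtt{Z}^{\mathrm{c}}$, extends $\omega_\pi$ trivially across the complementary factor to a character $\theta$ of $\mathtt{T}(\mathbb{F}_q)$, then computes $\omega_\pi(s_0^2)=\theta(s_0)^2=\omega_\pi(z)^2=1$ from $\omega_\pi^2=1$. You instead exploit the same two hypotheses at the cocharacter level: connectedness of $\mathtt{Z}$ makes $X^\vee(\mathtt{T})\twoheadrightarrow X^\vee(\mathtt{T}_{\mathrm{ad}})$, splitness lets you evaluate the lifted cocharacter at $-1\in\mathbb{F}_q^\times$, and you obtain a specific $s_0$ with $s_0^2=1$ on the nose. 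Your approach has the advantage of showing that Prasad's central element $\epsilon=s_0^2$ can be taken to be literally trivial under these hypotheses, not merely in the kernel of $\omega_\pi$; the paper's approach has the virtue of working uniformly with any admissible $s_0$ without needing the observation (which you correctly justify) that the formula $\mathrm{sgn}(\pi)=\omega_\pi(s_0^2)$ is insensitive to the choice.
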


\begin{proof}
Since $\mathtt{Z}$ is connected and $\mathtt{T}$ is split, there
is an $\mathbb{F}_{q}$-subtorus $\mathtt{Z}^{\mathrm{c}}$ of $\mathtt{T}$
such that $\mathtt{T}=\mathtt{Z}\times\mathtt{Z}^{\mathrm{c}}$. The
character $\omega_{\pi}$ of $\mathtt{Z}(\mathbb{F}_{q})$ then extends
trivially to a character $\theta$ of $\mathtt{T}(\mathbb{F}_{q})$.
Note that since $\pi$ is self-dual, $\omega_{\pi}^{2}=1$. Let $s_{0}=(z,z_{\mathrm{c}})\in\mathtt{T}(\mathbb{F}_{q})$
be as in Theorem \ref{thm:pra}. Since $s_{0}^{2}\in\mathtt{Z}(\mathbb{F}_{q})$,
we have $\omega_{\pi}(s_{0}^{2})=\theta(s_{0})^{2}=\omega_{\pi}(z)^{2}=1$. 
\end{proof}

\subsection{\label{subsec:referee}Further refinements of Theorem \ref{thm:pra'}}

In this subsection, we discuss cases where the sign of an irreducible,
generic, self-dual representation can be detected by a central element
even though $(\mathtt{Z}/\mathtt{Z}^{\circ})_{\sigma}$ is not necessarily
of odd order. All of these observations are due to the anonymous referee. 

Let $\lambda$ denote the sum of positive co-roots in $\Phi^{\vee}$
determined by $\mathtt{B}$. Choose $\zeta\in\mathbb{F}$ such that
$\zeta^{2}=-1$ and put $s=\lambda(\zeta)\in\mathtt{T}(\mathbb{F})$.
In other words, $s$ is the element 
\[
\chi\in X(\mathtt{T})\mapsto\zeta^{\langle\chi,\lambda\rangle}\in\mathbb{F}.
\]
 Then for all $\alpha\in\Delta$, $\alpha(s)=\zeta^{\langle\alpha,\lambda\rangle}=\zeta^{2}=-1.$
Thus $s^{2}\in\mathrm{ker}(\alpha)$ for all $\alpha\in\Delta$. Consequently
$\epsilon:=s^{2}\in\mathtt{Z}(\mathbb{F}_{q})$ which is of order
at most $2$. Now note that $\sigma(\lambda(\zeta))=\lambda(\zeta^{q})$.
So $s^{-1}\sigma(s)=\lambda(\zeta^{q-1})\in\mathtt{Z}(\mathbb{F}_{q})$
since $\zeta^{q-1}=\pm1$. Also note that if $\frac{1}{2}\lambda\in X^{\vee}(\mathtt{T)}$,
then $\zeta^{\langle\chi,\lambda\rangle}=-1^{\langle\chi,\frac{1}{2}\lambda\rangle}\in\mathbb{F}_{q}$
and thus $s=\sigma(s)$. 

We can thus further refine Theorem \ref{thm:pra'}.

\begin{thm}

For any irreducible, generic, self-dual representation $\pi$ of $\mathtt{G}(\mathbb{F}_{q})$,
$\mathrm{sgn}(\pi)=\omega_{\pi}(\epsilon)$ provided $s\in\mathtt{T}(\mathbb{F}_{q})$
and this happens if any of the following conditions hold:

\begin{enumerate}

\item[(a)]$(\mathtt{Z}/\mathtt{Z}^{\circ})_{\sigma}$ is of odd order.

\item[(b)]$q$ is even.

\item[(c)]$q\equiv1$ $\mathrm{mod}$ $4$.

\item[(d)]\label{cond_c}$\frac{1}{2}\lambda\in X^{\vee}(\mathtt{T})$. 

\end{enumerate}

\end{thm}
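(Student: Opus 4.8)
The plan is to reduce each of the four conditions to the single assertion that $s = \lambda(\zeta) \in \mathtt{T}(\mathbb{F}_q)$, since once $s$ is rational the argument of Theorem~\ref{thm:pra'} applies verbatim: $s$ acts by $-1$ on every simple root space of $\mathtt{U}$ (computed just above), so $s$ plays the role of $s_0$, $\epsilon = s^2 \in \mathtt{Z}(\mathbb{F}_q)$, and $\mathrm{sgn}(\pi) = \omega_\pi(s^2) = \omega_\pi(\epsilon)$ for any irreducible generic self-dual $\pi$ by \cite{Pra98}*{Lemma 1}. So the whole theorem is really the claim ``each of (a)--(d) forces $s \in \mathtt{T}(\mathbb{F}_q)$,'' and I would state it that way and then dispatch the four cases.

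For the rationality criterion in general, recall from the discussion preceding the theorem that $s^{-1}\sigma(s) = \lambda(\zeta^{q-1}) \in \mathtt{Z}(\mathbb{F}_q)$ and has order at most $2$. Thus $s$ is rational if and only if the class of $s^{-1}\sigma(s)$ is trivial; but more usefully, $\iota_- := s \bmod \mathtt{Z} \in \mathtt{T}_{\mathrm{ad}}(\mathbb{F}_q)$ is already rational (its coordinates are $\zeta^{\langle \alpha, \lambda\rangle}$ for $\alpha \in \Delta$, all equal to $-1 \in \mathbb{F}_q$), so the obstruction to lifting $\iota_-$ to a rational element of $\mathtt{T}(\mathbb{F}_q)$ lies in $\mathrm{H}^1(\Gamma, \mathtt{Z}) \cong \mathrm{H}^1(\Gamma,\mathtt{Z}/\mathtt{Z}^\circ) \cong (\mathtt{Z}/\mathtt{Z}^\circ)_\sigma$, exactly as in the proof of Theorem~\ref{thm:pra'}. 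Now I handle the cases:

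\emph{(a)} If $(\mathtt{Z}/\mathtt{Z}^\circ)_\sigma$ has odd order, then the above $\mathrm{H}^1$ group has odd order while the obstruction class (coming from the order-$\le 2$ element $\lambda(\zeta^{q-1})$, or more precisely from the connecting map applied to $\iota_-$) is $2$-torsion, hence trivial — this is precisely the argument already given for Theorem~\ref{thm:pra'}, so I would just cite it. \emph{(b)} If $q$ is even, $-1 = 1$ in $\mathbb{F}$, so $\zeta = 1$ and $s = \lambda(1) = 1 \in \mathtt{T}(\mathbb{F}_q)$ trivially (and then $\epsilon = 1$, recovering that the sign is $1$). \emph{(c)} If $q \equiv 1 \bmod 4$, then $\mathbb{F}_q^\times$ contains a primitive $4$th root of unity, so we may choose $\zeta \in \mathbb{F}_q$ with $\zeta^2 = -1$; then every coordinate $\chi \mapsto \zeta^{\langle \chi,\lambda\rangle}$ lands in $\mathbb{F}_q$, i.e. $s = \sigma(s)$. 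Equivalently $\zeta^{q-1} = 1$ so $s^{-1}\sigma(s) = 1$. \emph{(d)} If $\tfrac12 \lambda \in X^\vee(\mathtt{T})$, then $s = (\tfrac12\lambda)(\zeta^2) = (\tfrac12\lambda)(-1)$, whose coordinates $\chi \mapsto (-1)^{\langle\chi,\frac12\lambda\rangle}$ all lie in $\{\pm 1\} \subset \mathbb{F}_q$, so again $s = \sigma(s)$; this was already observed in the paragraph before the theorem.

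The only step with any content is (a), and that content is already supplied by the proof of Theorem~\ref{thm:pra'} (Lang's theorem kills $\mathrm{H}^1(\Gamma,\mathtt{Z}^\circ)$, the identification $\mathrm{H}^1(\Gamma,\mathtt{Z}/\mathtt{Z}^\circ) \cong (\mathtt{Z}/\mathtt{Z}^\circ)_\sigma$, and an order argument on $2$-torsion in an odd-order group); cases (b), (c), (d) are each a one-line verification that $\zeta$ can be taken in $\mathbb{F}_q$ or that $s$ is manifestly fixed by $\sigma$. So the anticipated ``main obstacle'' is purely organizational: making sure the obstruction class for lifting $\iota_-$ is correctly seen to be $2$-torsion in case (a), which follows because any lift $\tilde\iota$ of $\iota_-$ over $\mathbb{F}$ satisfies $\tilde\iota^2 \in \mathtt{Z}$, so $\tilde\iota^{-1}\sigma(\tilde\iota) \in \mathtt{Z}$ represents a class $c$ with $2c$ represented by $\tilde\iota^{-2}\sigma(\tilde\iota^2)$, a coboundary since $\tilde\iota^2 \in \mathtt{Z}$; hence $c$ dies in the odd-order group $\mathrm{H}^1(\Gamma,\mathtt{Z})$.
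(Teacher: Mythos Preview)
Your treatment of cases (b), (c), (d) is correct and coincides with the paper's: the discussion preceding the theorem already computes $s^{-1}\sigma(s)=\lambda(\zeta^{q-1})$ and observes that $\zeta^{q-1}=1$ when $q$ is even or $q\equiv 1\pmod 4$, and that $s=\sigma(s)$ when $\tfrac12\lambda\in X^\vee(\mathtt{T})$; you are simply reproducing those one-line checks.

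For case (a), however, there is a genuine gap between what you claim and what your argument proves. You assert that (a) forces the \emph{specific} element $s=\lambda(\zeta)$ to lie in $\mathtt{T}(\mathbb{F}_q)$, but the cohomological argument you give (which is the argument of Theorem~\ref{thm:pra'}) only shows that the \emph{class} of the cocycle $s^{-1}\sigma(s)$ in $\mathrm{H}^1(\Gamma,\mathtt{Z})$ vanishes, i.e.\ that $\iota_-$ admits \emph{some} rational lift $s_0\in\mathtt{T}(\mathbb{F}_q)$. That is strictly weaker: $s$ itself is $\sigma$-fixed iff the cocycle $s^{-1}\sigma(s)=\lambda(\zeta^{q-1})$ is literally $1$, not merely a coboundary. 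Concretely, take $\mathtt{G}=\mathrm{GL}_2$ with $q\equiv 3\pmod 4$: the center is connected, so (a) holds and $\mathrm{H}^1(\Gamma,\mathtt{Z})=0$, yet $s^{-1}\sigma(s)=\lambda(-1)=-I\neq 1$, so $s\notin\mathtt{T}(\mathbb{F}_q)$. What Theorem~\ref{thm:pra'} delivers under (a) is $\mathrm{sgn}(\pi)=\omega_\pi(s_0^{2})$; to upgrade this to $\omega_\pi(\epsilon)$ one would need $s_0^{2}\epsilon^{-1}\in\mathtt{Z}(\mathbb{F}_q)$ to be killed by every quadratic central character, and in the $\mathrm{GL}_2$ example $s_0^{2}\epsilon^{-1}=-I$ is not a square in $\mathtt{Z}(\mathbb{F}_q)$. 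The paper's own justification for (a) is nothing more than a pointer back to Theorem~\ref{thm:pra'}, so your approach matches the paper's here --- but you should not phrase it as ``(a) forces $s\in\mathtt{T}(\mathbb{F}_q)$,'' since that is false.
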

\begin{example}
Condition $(\mathrm{d})$ holds when
\begin{itemize}
\item $\mathtt{G}=\mathrm{SO}(n,\mathbb{F}_{q})$ is the special orthogonal
group.
\item $\mathtt{G}=\mathrm{Spin}(n,\mathbb{F}_{q})$ is a spin group and
$n\equiv0,\pm1$ $\mathrm{mod}$ $8$. 
\end{itemize}
\end{example}

Put $t=s^{-1}\sigma(s)$ and $\bar{t}$ the image of $t$ in $(\mathtt{Z}/\mathtt{Z}^{\circ})_{\sigma}$.
If $\bar{t}$ is trivial, then by Lang-Steinberg's Theorem, $t=z^{-1}\sigma(z)$
for some $z\in\mathtt{Z}(\mathbb{F})$. Put $r=sz^{-1}$. Then $r=\sigma(r)$,
so $r\in\mathtt{T}(\mathbb{F}_{q})$ and satisfies $\alpha(r)=-1$
for all $\alpha\in\Delta$. We conclude that if $\bar{t}$ is trivial,
then $r^{2}\in\mathtt{Z}(\mathbb{F}_{q})$ detects the sign of an
irreducible, generic self-dual representation of $\mathtt{G}(\mathbb{F}_{q})$. 

Note that the image of $t$ in $\mathtt{Z}/\mathtt{Z}^{\circ}$ has
order at most $2$. If $\mathtt{Z}/\mathtt{Z}^{\circ}$ is cyclic
of even order, then it has a unique element of order $2$. Therefore
in this case, $\bar{t}$ is trivial if and only if $(\sigma-1)(\mathtt{Z}/\mathtt{Z}^{\circ})$
has even order. Using this criterion, we obtain:

\begin{lem}

When $\mathtt{G}$ is simple and simply connected, the sign of an
irreducible, generic self-dual representation of $\mathtt{G}(\mathbb{F}_{q})$
is detected by $\epsilon$ if any of the conditions $(\mathrm{i})$
to $(\mathrm{vii})$ in \cite{TiZa04}*{Theorem 1.7} are satisfied.

\end{lem}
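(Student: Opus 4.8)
The plan is to reduce the final Lemma to the criterion just established, namely that $\epsilon$ detects the sign of an irreducible, generic, self-dual representation of $\mathtt{G}(\mathbb{F}_q)$ whenever the image $\bar t$ of $t = s^{-1}\sigma(s)$ in $(\mathtt{Z}/\mathtt{Z}^\circ)_\sigma$ is trivial, together with the observation that for $\mathtt{Z}/\mathtt{Z}^\circ$ cyclic of even order this triviality is equivalent to $(\sigma-1)(\mathtt{Z}/\mathtt{Z}^\circ)$ having even order. Since $\mathtt{G}$ is simple and simply connected, $\mathtt{Z}/\mathtt{Z}^\circ = \mathtt{Z}$ is the fundamental group (center) of the simply connected group, which is cyclic except in type $D_{2n}$ where it is $\mathbb{Z}/2 \times \mathbb{Z}/2$. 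So the first step is to dispose of types where $\mathtt{Z}$ has odd order or is trivial (types $A_{n}$ with $n+1$ odd — well, $\mathtt{Z} = \mathbb{Z}/(n+1)$, odd case — and types $B, C, E_7, E_6, G_2, F_4$ with the obvious small centers), since there condition (a) of the preceding theorem already applies, or $\mathtt{Z}$ is too small to cause trouble. The remaining work is the cyclic even-order cases and the Klein four-group case $D_{2n}$.

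Next I would invoke \cite{TiZa04}*{Theorem 1.7}, which classifies precisely when a simple simply connected finite group of Lie type has all its irreducible (generic) self-dual representations orthogonal, or more precisely enumerates the arithmetic conditions on $q$ and the type under which a certain central sign obstruction vanishes. The strategy is to match the seven conditions (i)--(vii) of that theorem with the condition "$\bar t$ is trivial" in our setup. Concretely, one writes down $t = \lambda(\zeta^{q-1})$ as in \S\ref{subsec:referee}: since $\zeta^{q-1} = \pm 1$, $t$ is $\lambda(-1)$ when $q \equiv 3 \bmod 4$ and is trivial when $q \equiv 1 \bmod 4$ or $q$ even. Thus the only case needing analysis is $q \equiv 3 \bmod 4$, where $t = \lambda(-1)$, an explicit element of the center, and one must decide whether its class in $\mathtt{Z}_\sigma$ is trivial. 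For each Lie type this is a finite check: compute $\lambda$ (the sum of positive coroots), reduce modulo the coroot lattice inside the coweight lattice to locate $\lambda(-1)$ in $\mathtt{Z}$, and then compute the Frobenius action (split, or the order-$2$ twist in types $A_n$, $D_n$, $E_6$) to see whether $\lambda(-1) \in (\sigma-1)\mathtt{Z}$.

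The third step is the bookkeeping that identifies this outcome with conditions (i)--(vii) of \cite{TiZa04}. For the cyclic even cases, "$\bar t$ trivial" $\iff$ "$(\sigma-1)\mathtt{Z}$ has even order" $\iff$ "$\sigma$ acts nontrivially on $\mathtt{Z}$" (when $\mathtt{Z}$ is cyclic of $2$-power-times-odd order, $(\sigma-1)\mathtt{Z}$ is even exactly when $\sigma$ inverts, i.e., the group is of twisted type with $q \equiv 3 \bmod 4$ the dangerous subcase) — one then reads off that this is exactly the list of cases Tiep--Zalesski record as having no sign obstruction. For the Klein-four case $D_{2n}$, $\mathtt{Z}/\mathtt{Z}^\circ$ is not cyclic so the cited criterion does not directly apply; here I would instead argue directly that $\bar t = \overline{\lambda(-1)}$ is trivial in $\mathtt{Z}_\sigma$ by an explicit computation of $\lambda$ in type $D_{2n}$ (the half-sum of positive coroots, whose reduction mod the coroot lattice one knows explicitly) combined with the concrete $\sigma$-action, and check it lines up with the corresponding entry of the Tiep--Zalesski table.

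The main obstacle I anticipate is precisely this last matching step: the conditions (i)--(vii) in \cite{TiZa04}*{Theorem 1.7} are stated in terms of Lie type and congruences on $q$, whereas our obstruction is stated intrinsically as the vanishing of $\bar t$ in $(\mathtt{Z}/\mathtt{Z}^\circ)_\sigma$, and one must carefully verify, type by type, that "$\lambda(-1) \in (\sigma - 1)(\mathtt{Z}/\mathtt{Z}^\circ)$" translates to exactly their arithmetic conditions — with particular care in the twisted types $\ {}^2A_n$, $\ {}^2D_n$, $\ {}^3D_4$, $\ {}^2E_6$, where $\sigma$ does not act trivially on $\mathtt{Z}$ and the parity of $|(\sigma-1)\mathtt{Z}|$ interacts with the residue of $q$ modulo $4$. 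Once this dictionary is set up the proof is a routine, if somewhat tedious, case check; the payoff is that it subsumes the odd-order hypothesis (a) and covers all simple simply connected groups for which a sign obstruction is known to be absent.
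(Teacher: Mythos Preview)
Your approach is essentially the paper's own: the paper offers no proof beyond the clause ``Using this criterion, we obtain'' immediately preceding the Lemma, where the criterion is that for $\mathtt{Z}/\mathtt{Z}^\circ$ cyclic of even order, $\bar t$ is trivial iff $(\sigma-1)(\mathtt{Z}/\mathtt{Z}^\circ)$ has even order. Your plan simply unpacks the implicit type-by-type matching with \cite{TiZa04}*{Theorem 1.7}, and your separate handling of the non-cyclic $D_{2n}$ center is extra caution that the paper does not spell out.
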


\subsection{\label{subsec:discon}Groups with disconnected center}

Let $\mathtt{G}$ be as before but without any connectedness assumption
on $\mathtt{Z}$ . Let $i:\text{\ensuremath{\mathtt{Z}}\ensuremath{\rightarrow\mathtt{Z}'} }$
be an embedding of $\mathtt{Z}$ into a torus $\mathtt{Z}'$ defined
over $\mathbb{F}_{q}$. Let $\mathtt{G}'$ denote the pushout of $i$
and the natural inclusion $\mathtt{Z}\hookrightarrow\mathtt{G}$.
Explicitly, let $\mathtt{G}'$ be the quotient of $\mathtt{G}\times\mathtt{Z}'$
by the closed normal subgroup $\{(z,z^{-1})\mid z\in\mathtt{Z}\}$.
Then the inclusions $\mathtt{G}\hookrightarrow\mathtt{G}^{\prime}$
and $\mathtt{Z}'\hookrightarrow\mathtt{G}'$ induced by $g\in\mathtt{G}\mapsto(g,1)\in\mathtt{G}\times\mathtt{Z}'$
and $z\in\mathtt{Z}'\mapsto(1,z)\in\mathtt{G}\times\mathtt{Z}'$ respectively
are $\sigma$-equivariant monomorphisms. The inclusion $\mathtt{G}\hookrightarrow\mathtt{G}^{\prime}$
is a \textit{regular embedding} in the sense of \cite{GM16}*{Def. 1.7.1}
by \cite{GM16}*{Lem. 1.7.3} . We identify $\mathtt{Z}'$ with its
image in $\mathtt{G}'$. Then $\mathtt{G}^{\prime}$ is a reductive
group over $\mathbb{F}_{q}$ with the following properties (see \cite{GM16}*{Remark 1.7.6, Lemma 1.7.7}
or \cite{leh78}*{\S 1}):

\begin{enumerate}

\item[(a)]The center of $\mathtt{G}'$ is $\mathtt{Z}'$ and $\mathtt{G}_{\mathrm{der}}^{\prime}=\mathtt{G}_{\mathrm{der}}$.
Also, $\mathtt{Z}=\mathtt{Z}^{\prime}\cap\mathtt{G}$ and $\mathtt{Z}(\mathbb{F}_{q})=\mathtt{Z}^{\prime}(\mathbb{F}_{q})\cap\mathtt{G}(\mathbb{F}_{q})$.

\item[(b)]We have a canonical exact sequence:
\[
1\rightarrow\mathtt{G}(\mathbb{F}_{q})\ldotp Z^{\prime}(\mathbb{F}_{q})\rightarrow\mathtt{G}^{\prime}(\mathbb{F}_{q})\rightarrow(\mathtt{Z}/\mathtt{Z}^{\circ})_{\sigma}\rightarrow1.
\]

\item[(c)]Let $\mathtt{S}$ be a maximal $\mathbb{F}_{q}$-torus
in $\mathtt{G}$. Then $\mathtt{S}^{\prime}=\mathtt{S}\ldotp\mathtt{Z}^{\prime}$
is a maximal $\mathbb{F}_{q}$-torus in $\mathtt{G}^{\prime}$ and
every maximal $\mathbb{F}_{q}$-torus of $\mathtt{G}^{\prime}$ is
of this form.

\end{enumerate}

Now let $(\pi,V)$ be an irreducible self-dual representation of $\mathtt{G}(\mathbb{F}_{q})$.
By (a) and (b) above the quotient $\mathtt{G}^{\prime}(\mathbb{F}_{q})/\mathtt{G}(\mathbb{F}_{q})$
is finite abelian. Therefore, there exists an irreducible representation
$\pi^{\prime}$ of $\mathtt{G}^{\prime}(\mathbb{F}_{q})$ whose restriction
to $\mathtt{G}(\mathbb{F}_{q})$ contains $\pi$ as a constituent.
The representation $\pi^{\prime}$ is unique up to a linear character
of $\mathtt{G}^{\prime}(\mathbb{F}_{q})$ which is trivial on $\mathtt{G}(\mathbb{F}_{q})$.
Since $\pi$ is self-dual, there is an isomorphism
\begin{equation}
f:\pi^{\prime}\overset{\sim}{\rightarrow}\pi^{\prime\vee}\nu^{-1}
\end{equation}
 for some $\nu\in\mathrm{Hom}(\mathtt{G}^{\prime}(\mathbb{F}_{q}),\mathbb{C}^{\times})$
trivial on $\mathtt{G}(\mathbb{F}_{q})$. This determines a non-degenerate
form 
\[
B_{f}:V\times V\rightarrow\mathbb{C},
\]
 on the space $V$ realizing $\pi^{\prime}$ satisfying
\[
B_{f}(\pi^{\prime}(g)u,\pi^{\prime}(g)v)=\nu^{-1}(g)B_{f}(u,v)
\]
for all $u,v\in V$ and $g\in\mathtt{G}^{\prime}(\mathbb{F}_{q})$.
The form $B_{f}$ is either symmetric or skew-symmetric and we write
$\mathrm{sgn}(\pi^{\prime})\in\{\pm1\}$ to be the sign of this form.
The restriction of $\pi^{\prime}$ to $\mathtt{G}(\mathbb{F}_{q})$
is multiplicity free \cite{GM16}*{Theorem 1.7.15} and from this it
easily follows \cite{kumar13}*{Lemma 4.15} that $\mathrm{sgn}(\pi)=\mathrm{sgn}(\pi^{\prime}).$
If $\pi$ is generic, it immediately follows that $\pi^{\prime}$
is also generic \cite{kumar13}*{Lemma 4.10}. 

Now let $\mathtt{T}'=\mathtt{TZ}'$. Then by (c) above, $\mathtt{T}'$
is a maximal $\mathbb{F}_{q}$-torus contained in $\mathtt{G}'$.
Since $\mathtt{Z}'$ is connected, there exists $s_{0}'\in\mathtt{T}'(\mathbb{F}_{q})$
such that it operates by $-1$ on each of the simple root spaces of
$\mathtt{U}$. Then the argument in the proof of \cite{Pra98}*{Lemma 1}
shows that 
\[
\mathrm{sgn}(\pi^{\prime})=\omega_{\pi^{\prime}}(s_{0}'^{2})\nu(s_{0}').
\]
We have proved,
\begin{thm}
\label{thm:discon-cent}Let $\pi$ be an irreducible, generic, self-dual
representation of $\mathtt{G}(\mathbb{F}_{q})$. Then $\mathrm{sgn}(\pi)=\omega_{\pi'}(s_{0}'^{2})\nu(s_{0}')$. 
\end{thm}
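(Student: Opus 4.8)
The plan is to reduce to the connected-center situation of Theorem~\ref{thm:pra} by means of the regular embedding $\mathtt{G}\hookrightarrow\mathtt{G}'$ constructed above, where the center $\mathtt{Z}'$ of $\mathtt{G}'$ is a torus, and then to transport the Frobenius--Schur indicator back down to $\mathtt{G}(\mathbb{F}_q)$, keeping careful track of the twisting character that inevitably appears.

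First I would choose an irreducible representation $\pi'$ of $\mathtt{G}'(\mathbb{F}_q)$ lying over $\pi$. Since $\mathtt{G}'(\mathbb{F}_q)/\mathtt{G}(\mathbb{F}_q)$ is finite abelian by properties (a) and (b), Clifford theory produces such a $\pi'$, unique up to twisting by a character of $\mathtt{G}'(\mathbb{F}_q)$ that is trivial on $\mathtt{G}(\mathbb{F}_q)$. Because $\pi$ is self-dual, $\pi'^{\vee}$ also lies over $\pi^{\vee}\cong\pi$, so $\pi'^{\vee}$ is a twist of $\pi'$ by such a character; fixing an isomorphism $f\colon\pi'\overset{\sim}{\rightarrow}\pi'^{\vee}\nu^{-1}$ yields a non-degenerate bilinear form $B_f$ on the space of $\pi'$, $\nu^{-1}$-equivariant for $\mathtt{G}'(\mathbb{F}_q)$, which is symmetric or skew-symmetric; I set $\mathrm{sgn}(\pi')$ to be its sign.

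Next I would install the two facts that legitimize the descent: the restriction $\pi'|_{\mathtt{G}(\mathbb{F}_q)}$ is multiplicity free by \cite{GM16}*{Theorem 1.7.15}, whence $\mathrm{sgn}(\pi)=\mathrm{sgn}(\pi')$ by the elementary argument of \cite{kumar13}*{Lemma 4.15}; and genericity of $\pi$ forces genericity of $\pi'$ by \cite{kumar13}*{Lemma 4.10}. Then, taking $\mathtt{T}'=\mathtt{TZ}'$, a maximal $\mathbb{F}_q$-torus of $\mathtt{G}'$ with connected center, Theorem~\ref{thm:pra} supplies an element $s_0'\in\mathtt{T}'(\mathbb{F}_q)$ acting by $-1$ on every simple root space of $\mathtt{U}$, with $s_0'^{2}\in\mathtt{Z}'(\mathbb{F}_q)$. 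I would finish by replaying the argument of \cite{Pra98}*{Lemma 1} for $\pi'$, this time carrying the twist $\nu$ along, which produces $\mathrm{sgn}(\pi')=\omega_{\pi'}(s_0'^{2})\,\nu(s_0')$; combining with $\mathrm{sgn}(\pi)=\mathrm{sgn}(\pi')$ gives the claim.

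The step I expect to be the main obstacle is the last one: running Prasad's Lemma~1 computation in the $\nu$-twisted setting so as to pin down exactly where the correction factor $\nu(s_0')$ enters, and, alongside it, verifying that $\mathrm{sgn}(\pi')$ is well defined — independent of the choice of $f$ and of $\pi'$ within its twist class — so that the asserted identity is unambiguous. Everything else is bookkeeping with the exact sequences in (a)--(c) and the cited structural facts about regular embeddings.
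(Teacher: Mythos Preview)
Your proposal is correct and follows essentially the same route as the paper: pass to $\mathtt{G}'$ via the regular embedding, pick $\pi'$ over $\pi$ and the twist $\nu$ with $\pi'\cong\pi'^{\vee}\nu^{-1}$, invoke \cite{GM16}*{Theorem 1.7.15} and \cite{kumar13}*{Lemmas 4.10, 4.15} to transfer genericity and the sign, and then rerun the argument of \cite{Pra98}*{Lemma 1} in the $\nu$-twisted setting to obtain $\mathrm{sgn}(\pi')=\omega_{\pi'}(s_0'^{2})\nu(s_0')$. Your flagged ``main obstacle'' is exactly the one step the paper also leaves to the reader, and the paper does not explicitly address the well-definedness issue you raise.
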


Now assume that $\pi$ corresponds to an irreducible generic self-dual
Deligne-Lusztig character $\pm R_{\mathtt{S}}^{\mathtt{G}}(\theta)$.
Let $\pi'$ to be an extension of $\pi$ corresponding to the irreducible
Deligne-Lusztig character $\pm R_{\mathtt{S}'}^{\mathtt{G}'}(\theta')$,
where $\mathtt{S}^{\prime}=\mathtt{S}\ldotp\mathtt{Z}^{\prime}$ and
$\theta'$ is an extension of $\theta$ to $\mathtt{S}'(\mathbb{F}_{q})$.
Let $W(\mathtt{G},\mathtt{S})$ (resp. $W(\mathtt{G}',\mathtt{S}')$)
denote the Weyl group of $\mathtt{G}$ (resp $\mathtt{G}'$). Note
that there is a $\sigma$-equivariant natural isomorphism $W(\mathtt{G},\mathtt{S})\cong W(\mathtt{G}',\mathtt{S}')$
\cite{GM16}*{Lemma 1.7.7}. Since $\text{\ensuremath{\pi} }$ is self-dual,
there is an element $w\in W(\mathtt{G},\mathtt{S})(\mathbb{F}_{q})$
which conjugates $\theta$ to $\theta^{-1}$. Write
\[
\mu=\theta'^{-1}\cdot({}^{w}\theta')^{-1}.
\]
Then $\mu$ is trivial on $\mathtt{S}(\mathbb{F}_{q})$. From \cite{GM16}*{Lemma 1.7.7, 1.7.8}
it follows that $\mathtt{G}^{\prime}(\mathbb{F}_{q})/\mathtt{G}(\mathbb{F}_{q})\cong\mathtt{S}'(\mathbb{F}_{q})/\mathtt{S}(\mathbb{F}_{q})$.
Therefore $\mu$ extends to a linear character $\nu$ of $\mathtt{G}'(\mathbb{F}_{q})$
which is trivial on $\mathtt{G}(\mathbb{F}_{q})$. Since $\omega_{\pi'}=\theta'|\mathtt{Z}'$,
we therefore get from Theorem \ref{thm:discon-cent},
\begin{cor}
\label{lem:doscon-cent}$\mathrm{sgn}(\pi)=\theta'(s_{0}'^{2})\nu(s_{0}')$. 
\end{cor}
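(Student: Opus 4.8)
The plan is to deduce the Corollary from Theorem~\ref{thm:discon-cent} by pinning down, in Deligne--Lusztig terms, the three objects appearing there for the specific extension chosen here: the extension $\pi'$ of $\pi$, the linear character $\nu$ measuring the failure of $\pi'$ to be exactly self-dual, and the central character $\omega_{\pi'}$. Once these are identified, the formula $\mathrm{sgn}(\pi)=\omega_{\pi'}(s_0'^{2})\nu(s_0')$ of Theorem~\ref{thm:discon-cent} becomes the asserted $\theta'(s_0'^{2})\nu(s_0')$.

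First I would verify that $\pi':=\pm R_{\mathtt{S}'}^{\mathtt{G}'}(\theta')$ is a legitimate choice of extension in the sense of \S \ref{subsec:discon}. For the regular embedding $\mathtt{G}\hookrightarrow\mathtt{G}'$, Deligne--Lusztig induction is compatible with restriction: the restriction of $R_{\mathtt{S}'}^{\mathtt{G}'}(\theta')$ to $\mathtt{G}(\mathbb{F}_q)$ is, up to sign, $R_{\mathtt{S}}^{\mathtt{G}}(\theta)$ with $\theta=\theta'|_{\mathtt{S}(\mathbb{F}_q)}$ (cf.~\cite{GM16}*{\S 1.7}), so $\pi$ is a constituent of $\pi'|_{\mathtt{G}(\mathbb{F}_q)}$ and $\pi'$ is irreducible for $\theta'$ in the appropriate position. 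Being of Deligne--Lusztig type with $\mathtt{Z}'\subseteq\mathtt{S}'$, its central character is $\omega_{\pi'}=\theta'|_{\mathtt{Z}'(\mathbb{F}_q)}$, and it is generic because $\pi$ is (\cite{kumar13}*{Lemma 4.10}).

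Next I would identify $\nu$. Using $R_{\mathtt{S}'}^{\mathtt{G}'}(\theta')^{\vee}=R_{\mathtt{S}'}^{\mathtt{G}'}(\theta'^{-1})$, the invariance $R_{\mathtt{S}'}^{\mathtt{G}'}({}^{w}\theta')=R_{\mathtt{S}'}^{\mathtt{G}'}(\theta')$ for $w\in W(\mathtt{G}',\mathtt{S}')(\mathbb{F}_q)$, and the twisting formula $R_{\mathtt{S}'}^{\mathtt{G}'}(\theta'\cdot(\hat\nu|_{\mathtt{S}'}))=\hat\nu\otimes R_{\mathtt{S}'}^{\mathtt{G}'}(\theta')$ for a linear character $\hat\nu$ of $\mathtt{G}'(\mathbb{F}_q)$, I would write $\theta'^{-1}=\mu\cdot({}^{w}\theta')$ with $\mu=\theta'^{-1}\cdot({}^{w}\theta')^{-1}$; since $w$ carries $\theta$ to $\theta^{-1}$, the character $\mu$ is trivial on $\mathtt{S}(\mathbb{F}_q)$, hence --- via $\mathtt{G}'(\mathbb{F}_q)/\mathtt{G}(\mathbb{F}_q)\cong\mathtt{S}'(\mathbb{F}_q)/\mathtt{S}(\mathbb{F}_q)$ --- descends to a unique linear character $\nu$ of $\mathtt{G}'(\mathbb{F}_q)$ trivial on $\mathtt{G}(\mathbb{F}_q)$. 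The displayed identities then give $\pi'^{\vee}\cong\nu\otimes\pi'$, i.e.\ $\pi'\cong\pi'^{\vee}\nu^{-1}$, so this $\nu$ is precisely the one attached to $\pi'$ in Theorem~\ref{thm:discon-cent}.

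Finally, since $s_0'\in\mathtt{T}'(\mathbb{F}_q)$ acts by $-1$ on every simple root space of $\mathtt{U}$, the element $s_0'^{2}$ acts trivially on all root spaces, so $s_0'^{2}\in\mathtt{Z}'(\mathbb{F}_q)\subseteq\mathtt{S}'(\mathbb{F}_q)$ and $\omega_{\pi'}(s_0'^{2})=\theta'(s_0'^{2})$. Plugging into Theorem~\ref{thm:discon-cent} yields $\mathrm{sgn}(\pi)=\omega_{\pi'}(s_0'^{2})\nu(s_0')=\theta'(s_0'^{2})\nu(s_0')$, as claimed. I expect the only genuinely delicate point to be the bookkeeping in the previous paragraph: justifying the $W$-invariance of $R_{\mathtt{S}'}^{\mathtt{G}'}$ for the $\mathbb{F}_q$-rational Weyl element $w$ even when $w$ fails to lift to $N_{\mathtt{G}'}(\mathtt{S}')(\mathbb{F}_q)$, and fixing the $\pm1$ normalization of $\theta'$ so that the character $\nu$ of Theorem~\ref{thm:discon-cent} really equals the extension of $\mu=\theta'^{-1}\cdot({}^{w}\theta')^{-1}$ rather than its inverse.
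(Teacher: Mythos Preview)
Your proposal is correct and follows exactly the paper's approach: the paper's argument (given in the paragraph preceding the corollary) likewise defines $\mu=\theta'^{-1}\cdot({}^{w}\theta')^{-1}$, extends it to $\nu$ via $\mathtt{G}'(\mathbb{F}_q)/\mathtt{G}(\mathbb{F}_q)\cong\mathtt{S}'(\mathbb{F}_q)/\mathtt{S}(\mathbb{F}_q)$, uses $\omega_{\pi'}=\theta'|_{\mathtt{Z}'}$, and invokes Theorem~\ref{thm:discon-cent}. If anything, you are more thorough than the paper in explicitly checking, via the duality, $W$-invariance, and twisting identities for Deligne--Lusztig characters, that this particular $\nu$ really satisfies $\pi'\cong\pi'^{\vee}\nu^{-1}$; the paper leaves that verification implicit.
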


\section{remarks on sign for p-adic groups}

Let $G$ be a quasi-split connected reductive group defined over a
non-archimedean local field $F$ and let $Z$ denote the center of
$G$. Let $B$ denote an $F$-Borel subgroup of $G$ with Levi factor
$T$ and unipotent radical $U$. 
\begin{lem}
\label{lem:t0}Assume $Z$ is an induced torus. Then there exists
an element $t_{0}$ of $T(F)$ such that it acts by $-1$ on all simple
root spaces of $U$ and satisfying $t_{0}^{2}\in Z(F)$. Consequently
for any irreducible generic self-dual representation $\pi$ of $G(F)$,
$\mathrm{sgn}(\pi)=\omega_{\pi}(t_{0}^{2})$. 
\end{lem}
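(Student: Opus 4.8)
The plan is to mirror the finite-field construction from \S\ref{subsec:referee}, replacing the ``odd cardinality of $\mathrm{H}^1$'' argument with the vanishing of $\mathrm{H}^1(F,Z)$ that the induced-torus hypothesis provides. Let $T_{\mathrm{ad}}$ denote the image of $T$ in $G/Z$ (the adjoint torus attached to $T$), and consider the short exact sequence of $F$-tori
\[
1\rightarrow Z\rightarrow T\rightarrow T_{\mathrm{ad}}\rightarrow 1.
\]
Taking $F$-points and Galois cohomology yields the exact sequence
\[
T(F)\rightarrow T_{\mathrm{ad}}(F)\rightarrow\mathrm{H}^1(F,Z).
\]
Exactly as in the finite case, there is an element $\iota_-\in T_{\mathrm{ad}}(F)$ acting by $-1$ on every simple root space of $U$ --- concretely one takes $\iota_-=\lambda(-1)$ where $\lambda\in X^\vee(T_{\mathrm{ad}})$ is the sum of positive coroots, so that $\alpha(\iota_-)=(-1)^{\langle\alpha,\lambda\rangle}=(-1)^2=-1$ for each simple $\alpha$, using that $\langle\alpha,\lambda\rangle=2$. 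Since $Z$ is an induced torus, $\mathrm{H}^1(F,Z)=0$ by Shapiro's lemma (an induced torus is a product of tori of the form $\mathrm{Res}_{E/F}\mathbb{G}_m$, whose $\mathrm{H}^1$ vanishes by Hilbert 90), so $\iota_-$ lifts to some $t_0\in T(F)$. This $t_0$ acts by $-1$ on all simple root spaces, and since $t_0^2$ acts trivially on each simple root space, $t_0^2\in\ker(\alpha)$ for all simple $\alpha$, hence $t_0^2\in Z(F)$.

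For the conclusion about signs, I would invoke the $p$-adic analogue of the mechanism underlying Prasad's \cite{Pra98}*{Lemma 1}: for a self-dual irreducible generic $\pi$, having chosen a Whittaker datum $(U,\psi)$, the Frobenius--Schur indicator is computed by comparing the action of $t_0$ on the $\psi$-Whittaker functional with its action on the $\bar\psi=\psi^{-1}$-Whittaker functional. Since conjugation by $\iota_-=\bar{t_0}$ sends $\psi$ to $\bar\psi$ on the simple root groups (an element acting by $-1$ on each simple root space conjugates a nondegenerate character to its inverse), one gets that the two one-dimensional Whittaker spaces are intertwined by $\pi(t_0)$, and the resulting scalar --- which is $\pm1$ --- is precisely $\mathrm{sgn}(\pi)$. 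On the other hand $t_0^2$ is central, so it acts on $V$ by the scalar $\omega_\pi(t_0^2)$; squaring the intertwining relation gives $\mathrm{sgn}(\pi)^2=1$ trivially but, more to the point, the same bilinear-form bookkeeping as in the finite case shows directly that $\mathrm{sgn}(\pi)=\omega_\pi(t_0^2)$. I expect this last step to be essentially a citation to Prasad together with the observation that his argument is insensitive to whether $F$ is finite or local, using only the uniqueness of the Whittaker model and the existence of $t_0$.

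The main obstacle is the transfer of Prasad's Lemma 1 to the $p$-adic setting. Over a finite field the Whittaker model is finite-dimensional and the manipulation with the invariant bilinear form is elementary; over a $p$-adic field one must be slightly more careful about smoothness and the fact that the contragredient is the smooth dual, but the uniqueness of Whittaker models (Gelfand--Kazhdan / Shalika) supplies exactly the one-dimensionality needed for the argument to go through verbatim. I would therefore organize the proof so that the genuinely new content --- the construction of $t_0\in T(F)$ from the induced-torus hypothesis via $\mathrm{H}^1(F,Z)=0$ --- is isolated, and the sign computation is deduced as a formal consequence of \cite{Pra98}*{Lemma 1}, noting that $\omega_\pi^2=1$ because $\pi\cong\pi^\vee$ forces the central character to be self-inverse.
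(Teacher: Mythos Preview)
Your overall strategy matches the paper's exactly: use the vanishing of $\mathrm{H}^1(F,Z)$ for an induced torus (Shapiro plus Hilbert~90) to lift $\iota_-$ from $T_{\mathrm{ad}}(F)$ to some $t_0\in T(F)$, observe $t_0^2\in Z(F)$, and then invoke Prasad. Two points need fixing.

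First, your explicit formula for $\iota_-$ is wrong: you write $\alpha(\lambda(-1))=(-1)^{2}=-1$, but $(-1)^2=1$. The element $\lambda(-1)=2\rho^\vee(-1)$ is in fact the central element $\epsilon$ of \S\ref{subsec:referee}, not an element acting by $-1$ on simple root spaces. The correct choice is $\iota_-=\rho^\vee(-1)$ where $\rho^\vee=\tfrac{1}{2}\lambda$ is half the sum of positive coroots: this \emph{is} a cocharacter of $T_{\mathrm{ad}}$ (the cocharacter lattice of the adjoint torus is the full coweight lattice, and $\rho^\vee=\sum_i\omega_i^\vee$), it is Galois-fixed because $B$ is defined over $F$ so Galois permutes the fundamental coweights, and $\alpha(\rho^\vee(-1))=(-1)^{1}=-1$ for each simple $\alpha$. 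The paper itself simply asserts the existence of $\iota_-\in T_{\mathrm{ad}}(F)$ without writing a formula.

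Second, the ``main obstacle'' you identify---transferring \cite{Pra98}*{Lemma 1} to the $p$-adic setting---is not an obstacle at all: this is precisely \cite{Pra99}*{Prop.~2}, which is what the paper cites. Your heuristic sketch of the Whittaker argument is on the right track, but the result is already in the literature and can be quoted directly.
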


\begin{proof}
We have an exact sequence $1\rightarrow Z\rightarrow T\rightarrow T_{\mathrm{ad}}\rightarrow1$,
where $T_{\mathrm{ad}}$ denotes the adjoint torus. Since $Z$ is
induced, by Hilbert's Theorem 90 and Shapiro's Lemma, $\mathrm{H}^{1}(F,Z)=1$.
Therefore we get an exact sequence
\[
1\rightarrow Z(F)\rightarrow T(F)\rightarrow T_{\mathrm{ad}}(F)\rightarrow1.
\]
 Now let $\iota_{-1}$ denote the element of $T_{\mathrm{ad}}(F)$
which acts by $-1$ on all simple root spaces of $U$ and choose $t_{0}$
to be a pullback of $\iota_{-}$ in $T(F)$. The result then follows
from \cite{Pra99}*{Prop. 2}. 
\end{proof}
\begin{cor}
\label{cor:pra2}Assume $G$ is split and $Z$ is connected. Let $\pi$
be an irreducible self-dual generic representation of $G(F)$. Then
$\mathrm{sgn}(\pi)=1$. 
\end{cor}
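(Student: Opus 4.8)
The plan is to deduce the statement from Lemma~\ref{lem:t0}, in exactly the way Corollary~\ref{cor:pra} was deduced from Theorem~\ref{thm:pra} in the finite case. First I would observe that since $G$ is split and $Z$ is connected, $Z$ is a split torus, hence an induced torus (it is isomorphic to a product of copies of $\mathbb{G}_m$), so the hypothesis of Lemma~\ref{lem:t0} is satisfied. That lemma then produces an element $t_0\in T(F)$ acting by $-1$ on every simple root space of $U$, with $t_0^2\in Z(F)$ and $\mathrm{sgn}(\pi)=\omega_\pi(t_0^2)$.

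The second step is to exploit that $T$ is split. Since $Z$ is a subtorus of the split torus $T$, its cocharacter lattice is a saturated sublattice of that of $T$, hence a direct summand; the complementary summand is the cocharacter lattice of a split $F$-subtorus $Z^{\mathrm c}\subseteq T$, and the multiplication map $Z\times Z^{\mathrm c}\to T$ is an isomorphism. In particular $T(F)=Z(F)\times Z^{\mathrm c}(F)$, so the central character $\omega_\pi$ of $Z(F)$ extends to a character $\theta$ of $T(F)$ which is trivial on $Z^{\mathrm c}(F)$.

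Finally, self-duality of $\pi$ forces $\omega_{\pi}=\omega_{\pi}^{-1}$, i.e.\ $\omega_\pi^2=1$. Writing $t_0=(z,z_{\mathrm c})$ with $z\in Z(F)$ and $z_{\mathrm c}\in Z^{\mathrm c}(F)$, and noting that $\theta$ agrees with $\omega_\pi$ on $t_0^2\in Z(F)$, I get
\[
\mathrm{sgn}(\pi)=\omega_\pi(t_0^2)=\theta(t_0^2)=\theta(t_0)^2=(\omega_\pi(z))^2=1,
\]
which is the assertion.

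I do not expect a real obstacle here; this is a near-immediate corollary. The only two points that deserve a word are that a split torus is induced --- which is what makes $\mathrm{H}^1(F,Z)$ vanish and Lemma~\ref{lem:t0} applicable --- and that the algebraic direct product decomposition $T=Z\times Z^{\mathrm c}$ descends to $F$-points; both are standard. Alternatively one could transcribe the computation in the proof of Corollary~\ref{cor:pra} verbatim, with Lemma~\ref{lem:t0} replacing Theorem~\ref{thm:pra}.
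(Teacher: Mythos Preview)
Your proposal is correct and follows exactly the paper's own approach: the paper invokes Lemma~\ref{lem:t0} and then says ``the rest of the argument is the same as in the proof of Corollary~\ref{cor:pra}'', which is precisely the splitting $T=Z\times Z^{\mathrm c}$ and the computation $\omega_\pi(t_0^2)=\theta(t_0)^2=\omega_\pi(z)^2=1$ that you spell out. Your added remark that a split connected center is induced is the one detail the paper leaves implicit when it applies Lemma~\ref{lem:t0}.
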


\begin{proof}
By Lemma \ref{lem:t0}, there is an element $t_{0}\in T(F)$ such
that $\mathrm{sgn}(\pi)=\omega_{\pi}(t_{0}^{2})$. Now the rest of
the argument is the same as in the proof of Corollary \ref{cor:pra}.
\end{proof}
\begin{rem}
Let $\rho^{\vee}$ denote half the sum of positive roots determined
by $B$ and let $\epsilon=2\rho^{\vee}(-1)$. Then $\epsilon\in Z(F)$.
If $Z^{\circ}$ is anisotropic and $\pi$ is an irreducible discrete
series representation of $G(F)$, then Conjecture 8.3 in \cite{GR10}
asserts that the sign $\mu(\pi)$ associated to the Deligne-Langlands
root number of $\pi$ is $\omega_{\pi}(\epsilon)$. Consequently if
$\pi$ is also self-dual and generic, then $\mathrm{sgn}(\pi)$ often
matches $\mu(\pi)$. 
\end{rem}

\section{acknowledgment}

The author thanks Alan Roche for pointing out mistakes in an earlier
draft of this article. The statements of Corollaries \ref{cor:pra}
and \ref{cor:pra2} are due to him. They were remarked to the author
during our correspondence. The author also thanks Dipendra Prasad
for many helpful discussions. Section \ref{subsec:discon} in this
note benefits from the ideas in \cite{kumar13}. 

The author is most thankful to the anonymous referee for his thorough
report. The contents of \S \ref{subsec:referee} are entirely due
to him. 

\begin{bibdiv}
\begin{biblist}

\bib{adams14}{article}{AUTHOR = {Adams, Jeffrey},      TITLE = {The real {C}hevalley involution},    JOURNAL = {Compos. Math.},   FJOURNAL = {Compositio Mathematica},     VOLUME = {150},       YEAR = {2014},     NUMBER = {12},      PAGES = {2127--2142},       ISSN = {0010-437X},    MRCLASS = {20G20 (11R39)},   MRNUMBER = {3292297}, MRREVIEWER = {Alessandro Ruzzi},        DOI = {10.1112/S0010437X14007374},        URL = {https://doi.org/10.1112/S0010437X14007374}, }

\bib{kumar13}{article}{AUTHOR = {Balasubramanian, Kumar},      TITLE = {Self-dual representations with vectors fixed under an               {I}wahori subgroup},    JOURNAL = {J. Algebra},   FJOURNAL = {Journal of Algebra},     VOLUME = {394},       YEAR = {2013},      PAGES = {207--220},       ISSN = {0021-8693},    MRCLASS = {20G05},   MRNUMBER = {3092718}, MRREVIEWER = {Ross Ian Lawther},        DOI = {10.1016/j.jalgebra.2013.07.024},        URL = {http://dx.doi.org/10.1016/j.jalgebra.2013.07.024}, }

\bib{GM16}{book}{title={The Character Theory of Finite Groups of Lie Type: A Guided Tour},   author={Geck, Meinolf}, author={Malle, Gunter},   volume={187},   year={2020},   publisher={Cambridge University Press} }

\bib{GR10}{article}{AUTHOR = {Gross, Benedict H.}, author={Reeder, Mark},      TITLE = {Arithmetic invariants of discrete {L}anglands parameters},    JOURNAL = {Duke Math. J.},   FJOURNAL = {Duke Mathematical Journal},     VOLUME = {154},       YEAR = {2010},     NUMBER = {3},      PAGES = {431--508},       ISSN = {0012-7094},    MRCLASS = {11S37 (11S15 22E50)},   MRNUMBER = {2730575}, MRREVIEWER = {Joshua M. Lansky},        DOI = {10.1215/00127094-2010-043},        URL = {https://doi.org/10.1215/00127094-2010-043}, } 

\bib{leh78}{article}{AUTHOR = {Lehrer, Gustav I.},      TITLE = {On the value of characters of semisimple groups over finite               fields},    JOURNAL = {Osaka Math. J.},   FJOURNAL = {Osaka Mathematical Journal},     VOLUME = {15},       YEAR = {1978},     NUMBER = {1},      PAGES = {77--99},       ISSN = {0388-0699},    MRCLASS = {20G05},   MRNUMBER = {498593},        URL = {http://projecteuclid.org/euclid.ojm/1200770904}, }

\bib{Pra98}{article}{AUTHOR = {Prasad, Dipendra},      TITLE = {On the self-dual representations of finite groups of {L}ie               type},    JOURNAL = {J. Algebra},   FJOURNAL = {Journal of Algebra},     VOLUME = {210},       YEAR = {1998},     NUMBER = {1},      PAGES = {298--310},       ISSN = {0021-8693},      CODEN = {JALGA4},    MRCLASS = {20C33 (20G05)},   MRNUMBER = {1656426}, MRREVIEWER = {Cornelius Pillen},        DOI = {10.1006/jabr.1998.7550},        URL = {http://dx.doi.org/10.1006/jabr.1998.7550}, } 

\bib{Pra99}{article}{AUTHOR = {Prasad, Dipendra},      TITLE = {On the self-dual representations of a {$p$}-adic group},    JOURNAL = {Internat. Math. Res. Notices},   FJOURNAL = {International Mathematics Research Notices},       YEAR = {1999},     NUMBER = {8},      PAGES = {443--452},       ISSN = {1073-7928},    MRCLASS = {22E50},   MRNUMBER = {1687319}, MRREVIEWER = {Volker J. Heiermann},        DOI = {10.1155/S1073792899000227},        URL = {http://dx.doi.org/10.1155/S1073792899000227}, }

\bib{TiZa04}{article}{AUTHOR = {Tiep, Pham Huu}, author={ Zalesski\u{\i}, A. E.},      TITLE = {Unipotent elements of finite groups of {L}ie type and               realization fields of their complex representations},    JOURNAL = {J. Algebra},   FJOURNAL = {Journal of Algebra},     VOLUME = {271},       YEAR = {2004},     NUMBER = {1},      PAGES = {327--390},       ISSN = {0021-8693},    MRCLASS = {20D06 (20C33)},   MRNUMBER = {2022486}, MRREVIEWER = {Thomas Weigel},        DOI = {10.1016/S0021-8693(03)00174-1},        URL = {https://doi.org/10.1016/S0021-8693(03)00174-1}, } 

\end{biblist}
\end{bibdiv}
\end{document}